\title{Strongly graded groupoid and directed graph algebras}
\author{Lisa Orloff Clark, Ellis Dawson and Iain Raeburn}
\address{Department of Mathematics and Statistics, Victoria University of Wellington, PO Box 600, Wellington 6140, New Zealand}
\thanks{This research was supported by a Marsden grant from the Royal Society of New Zealand.}
\subjclass [2010]{16S99, 16S10, 22A22, 46L05, 46L55}
\keywords {Groupoid $C^*$-algebra, Steinberg algebra, Leavitt path algebra}
\newtheorem{thm}{Theorem}%[section]
\newtheorem{prop}[thm]{Proposition}
\newtheorem{lem}[thm]{Lemma}
\newtheorem{cor}[thm]{Corollary}
\newtheorem*{thm*}{Theorem}
\theoremstyle{definition}
\newtheorem*{dfn*}{Definition}
\theoremstyle{remark}
\newtheorem{rmk}[thm]{Remark}
\newcommand{\C}{\mathbb{C}}
\newcommand{\N}{\mathbb{N}}
\newcommand{\T}{\mathbb{T}}
\newcommand{\Z}{\mathbb{Z}}
\def\LC{\ensuremath{L_\mathbb{C}(E)}}
\def\LK{\ensuremath{L_K(E)}}
\newcommand{\Aut}{\operatorname{Aut}}
\newcommand{\lsp}{\operatorname{span}}
\newcommand{\supp}{{\operatorname{supp}}}
\begin{document}

\begin{abstract}
We show the reduced $C^*$-algebra of a graded ample groupoid is a strongly graded $C^*$-algebra if and only if the corresponding Steinberg algebra is a strongly graded ring.  We apply this result to get a theorem about  the Leavitt path algebra and $C^*$-algebra of an arbitrary graph.  \end{abstract}

\maketitle

The prototypical example of a strongly $\Z$-graded ring is the ring $K[x,x^{-1}]$ of Laurent polynomials over a field $K$. The homogeneous components are the sets $Kx^p$, and for $m,n\in\Z$ such that $m+n=p$, every element $rx^p\in Kx^p$ can be realised as a product $(rx^m)(1x^n)$ of elements in the homogeneous components $Kx^m$ and $Kx^n$. The $C^*$-algebraic analogue of $K[x,x^{-1}]$ is the $C^*$-algebra $C(\T)$ of continuous functions on the unit circle $\T\subseteq\C$, which by the Stone--Weierstrass theorem contains the trigonometric polynomials as a dense subspace. This is the prototype of  a strongly $\mathbb{Z}$-graded $C^*$-algebra with homogeneous subspaces $C(\T)_n=\C z^n$.  

To be more precise, 
suppose that $A$ is a $C^*$-algebra and $\Gamma$ is a group. Following Exel \cite[Definition~16.2]{E}, we say that $A$ is  a \emph{$\Gamma$-graded  $C^*$-algebra} if there are linearly independent closed subspaces $\{A_{\gamma}:\gamma\in \Gamma\}$ of $A$ such that for every $\alpha,\beta\in \Gamma$, we have
\begin{enumerate}
\item$A_{\alpha}\cdot A_{\beta}:=\lsp\{ab:a\in A_{\alpha},b\in A_{\beta}\}\subseteq A_{\alpha\beta}$,
\item $A_{\alpha}^*:=\{a^*:a\in A_{\alpha}\}\subseteq A_{\alpha^{-1}}$ and
\item $\bigoplus_{\gamma\in \Gamma}A_{\gamma}$ is dense in $A$.
\end{enumerate} 
We say that $A$ is a \emph{strongly $\Gamma$-graded $C^*$-algebra} if in addition $A_{\alpha}\cdot A_{\beta}$ is dense in $A_{\alpha\beta}$ for all $\alpha\beta\in \Gamma$.

Our work is inspired by a recent result of Clark, Hazrat and Rigby in \cite{CHR} which identifies the groupoids for which the Steinberg algebra is a strongly graded ring.  Given an ample groupoid $G$ with Hausdorff unit space and a continuous cocycle $c:G \to \Gamma$,  
$c$ induces a grading on the Steinberg $R$-algebra $A_R(G)$.   The groupoid $G$ is strongly graded in the sense that $c^{-1}(\alpha)c^{-1}(\beta) = c^{-1}(\alpha\beta)$ for all $\alpha, \beta \in \Gamma$ if and only if $A_R(G)$ is a strongly $\Gamma$-graded ring \cite[Theorem~3.11]{CHR}.     
Strongly graded groupoids also appear in \cite{BM}.
In Proposition~\ref{prop:gd} we show that $c$ also induces a $\Gamma$-grading on the reduced $C^*$-algebra $C^*_r(G)$.  
We then show in Theorem~\ref{thm:gpd} that $C^*_r(G)$ is a strongly $\Gamma$-graded $C^*$-algebra if and only if the complex Steinberg algebra $A(G)$ is a strongly $\Gamma$-graded ring.   Our main tool in the proof is the injective, continuous linear map $j$ from $C^*_r(G)$ into the space of functions from $G$ to $\C$ that are bounded with respect to the uniform norm from \cite[Proposition~4.4.2]{renault}.

The key examples for us are the $C^*$-algebras $C^*(E)$ of directed graphs $E$, for which the gauge action $\gamma:\T\to \Aut C^*(E)$ gives a $\Z$-grading with
\[
C^*(E)_n:=\{a\in C^*(E):\gamma_z(a)=z^na\text{ for all }z\in \T\}.
\]
In \cite{CHR}, the authors identify the directed graphs $E$ for which the Leavitt path algebras $L_R(E)$ are strongly $\Z$-graded in \cite[Theorem~4.2]{CHR}.   (See also \cite[Theorem~1.3]{NO}.) 
We use our Theorem~\ref{thm:gpd} to show $\LC$ is strongly $\Z$-graded if and only if $C^*(E)$ is strongly $\Z$-graded. 
Thus the graph condition of \cite[Theorem~2.4]{CHR} (which we will describe soon) also characterises when $C^*(E)$ is strongly $\Z$-graded.  

Since we are proving a $C^*$-algebraic result, we use the convention that the spanning elements $\{\alpha\beta^*\}$ of $L_{\C}(E)$ are determined by pairs $\alpha,\beta\in E^*$ satisfying $s(\alpha)=s(\beta)$.  In other words, we use the conventions of \cite{R} rather than those of \cite{AAM}. With these conventions, a directed graph $E$ \emph{has property (Y)} if, for every infinite path $x\in E^\infty$ and every $k\in \N$, there are an initial segment $\alpha$ of $x$ and a finite path $\beta\in E^*$ such that $s(\alpha)=s(\beta)$ and $|\beta|-|\alpha|=k$, that is, $\beta$ has $k$ more edges than $\alpha$. The reason for the name is the picture:  

 \[\xymatrix{
\bullet& \ar[l] \cdots\ar@{-}[r]^{\mbox{$\mkern60mu\overbrace{\phantom{\scriptscriptstyle abcdefghijklmnopqr}}^{\alpha}$}^{\mbox{$\mkern-100mu\overbrace{\phantom{\scriptscriptstyle abcdefghijklmnopqrstuvwxyz123456789}}^{x}$}}}  & \ar[l]\ar[dl]\bullet &  \ar[l]\bullet & \ar[l]\cdots  \\ & \udots \ar[dl]&_{\beta} &&  \\
 \bullet   &&&& &  & } 
 \]
 
We can state our main graph algebra result as follows.
\begin{thm}\label{mainthm}
Suppose that $E$ is a directed graph. Then the graph algebra $C^*(E)$ is strongly $\Z$-graded if and only if $E$ is row-finite, has no sources, and has property (Y).
\end{thm}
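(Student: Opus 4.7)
The plan is to reduce the theorem to Theorem~\ref{thm:gpd} together with the algebraic characterisation in \cite[Theorem~4.2]{CHR}. For any directed graph $E$, let $G_E$ denote the boundary-path groupoid, which is a Hausdorff ample groupoid carrying the canonical length cocycle $c_E:G_E\to\Z$ given by $c_E(x,k,y)=k$. There are standard isomorphisms $L_\C(E)\cong A(G_E)$ and $C^*(E)\cong C^*_r(G_E)$ (the second using amenability of $G_E$). Under both, the generator $s_\mu s_\nu^*$ with $s(\mu)=s(\nu)$ corresponds to the characteristic function of the compact open bisection $Z(\mu,\nu)\subseteq G_E$, on which $c_E$ takes the constant value $|\mu|-|\nu|$.

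I would then verify that these isomorphisms intertwine the relevant $\Z$-gradings: the length grading on $L_\C(E)$ with the $c_E$-grading on $A(G_E)$, and the gauge-action grading on $C^*(E)$ with the grading on $C^*_r(G_E)$ produced by Proposition~\ref{prop:gd}. The first match is immediate from the description of the generators. The second reduces to the observation that the $\T$-action on $C^*_r(G_E)$ associated with $c_E$ is point-norm continuous and acts on $s_\mu s_\nu^*$ as $z\mapsto z^{|\mu|-|\nu|}$, which agrees with the gauge action on $C^*(E)$; density of these generators and continuity of the conditional expectations onto the homogeneous subspaces then force the two gradings to coincide.

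With the gradings identified, Theorem~\ref{thm:gpd} applied to $(G_E,c_E)$ gives that $C^*(E)$ is strongly $\Z$-graded if and only if $L_\C(E)$ is strongly $\Z$-graded, and \cite[Theorem~4.2]{CHR} states that $L_\C(E)$ is strongly $\Z$-graded if and only if $E$ is row-finite, has no sources, and has property (Y); composing these two equivalences finishes the proof. The main obstacle I anticipate is the bookkeeping in the identification step, especially keeping track of the $s_\mu s_\nu^*$ convention adopted from \cite{R} so that the $n$-th homogeneous subspaces on the two sides of each isomorphism match; once this is done carefully, the two cited results combine with essentially no further work.
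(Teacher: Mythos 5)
Your proposal is correct and follows essentially the same route as the paper: the paper's Lemma~\ref{lem:help} constructs exactly the graded isomorphisms $L_\C(E)\cong A(G_E)$ and $C^*(E)\cong C^*(G_E)$ for the boundary-path groupoid, Proposition~\ref{C*-ring} then combines these with Theorem~\ref{thm:gpd}, and the final step invokes \cite[Theorem~4.2]{CHR} just as you do. The only difference is that the paper spells out the verification of the $C^*$-isomorphism via universal properties rather than citing it as standard, which is a matter of exposition rather than substance.
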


As a corollary, we get that $C^*(E)$ is a strongly $\Z$-graded $C^*$-algebra if and only if $L_{\C}(E)$ is strongly $\Z$-graded ring.  We conclude with a brief analysis of $(L(E))_0$, the core of a Leavitt path algebra which is dense inside of the core of $C^*(E)$.  We show that every element of the core of a Leavitt path algebra is contained in a finite-dimensional subalgebra.  Previously this was known for row-finite graphs, see for example the last line of the proof of \cite[Theorem~5.3]{AMP}. 

\section{Strongly graded Steinberg algebras and groupoid $C^*$-algebras}

We begin this section with some definitions.
Suppose $R$ is a ring.  
If $A$ and $B$ are subsets of $R$, then we write $AB$ for the set of all finite sums of elements of the form $ab$ where $a \in A$ and $b \in B$.   
Let $\Gamma$ be a group.   We say that $R$ is a \emph{ $\Gamma$-graded ring} if there are subgroups $\{R_{\gamma}:\gamma \in \Gamma\}$  in $R$ such that  for every $\alpha,\beta\in \Gamma$
\begin{enumerate}
\item\label{it1:grring} $R_{\alpha} R_{\beta} \subseteq R_{\alpha\beta}$ and  
\item\label{it2:grring} $\bigoplus_{\gamma \in \Gamma}R_{\gamma} =R.$
\end{enumerate}  
Further, we say $R$ is a \emph{strongly $\Gamma$-graded ring} if the containment in \eqref{it1:grring} above is in fact an equality.

We say a groupoid $G$ is ample if it is a topological groupoid and has a basis of compact open bisections.  
When $G$ is an ample groupoid such that $G^{(0)}$ is Hausdorff, the complex Steinberg algebra of $G$ is the space
\[A(G):= \lsp\{1_B : B \text{ is a compact open bisection}\}\]
where $1_X$ denotes the characteristic function from $G$ to $\C$ of $X$.
Addition and scalar multiplication are defined pointwise, and the convolution product is such that
\[1_B1_D = 1_{BD}\]
for compact open bisections $B$ and $D$.
If $c:G \to \Gamma$ is a continuous cocycle into a discrete group $\Gamma$, then the Steinberg algebra $A(G)$ is  a $\Gamma$-graded ring such that for $\gamma \in \Gamma$
\[A(G)_{\gamma}:=   \{f \in A(G) : \supp f \subseteq G_{\gamma}\}\]
where $G_{\gamma}:=c^{-1}(\gamma)$.  
Thus \[A(G) = \bigoplus_{\gamma \in \Gamma}A(G)_{\gamma}.\]
(See \cite[Lemma~3.11]{CS}.)
  
The next proposition shows that this structure also gives a grading on the groupoid $C^*$-algebra.
For this proposition (and also the following theorem) we use the injective continuous linear map 
\[j:C^*_r(G) \to \mathcal{B}(G), \]
where $ \mathcal{B}(G)$ denotes the normed vector space of bounded functions from $G$ to $\C$ with respect to $\| \cdot \|_{\infty}$.   This map restricts to the identity on $A(G)$.  (See, for example, \cite[page~3680]{CEPSS}.) 

\begin{prop}\label{prop:gd}
Let $G$ be an ample groupoid such that $G^{(0)}$ is Hausdorff and let $c:G \to \Gamma$ be a continuous cocycle into a discrete group $\Gamma$.  Then $C^*_r(G)$ is a $\Gamma$-graded $C^*$-algebra such that for each $\gamma \in \Gamma$ we have
\[C^*_r(G)_{\gamma} := \overline{A(G)_{\gamma}}.\]

\end{prop}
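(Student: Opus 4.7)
The plan is to check the four requirements of Exel's definition of a $\Gamma$-graded $C^*$-algebra for the closed subspaces $C^*_r(G)_\gamma := \overline{A(G)_\gamma}$, using continuity of the algebraic operations on $C^*_r(G)$ to transfer the corresponding properties from the dense subring $A(G)$, which is already $\Gamma$-graded by \cite[Lemma~3.11]{CS}. The only step that is not a routine continuity argument is the linear independence of the family $\{C^*_r(G)_\gamma\}_{\gamma \in \Gamma}$; for this I would appeal to the injective continuous linear map $j \colon C^*_r(G) \to \BB(G)$.

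For condition~(1), given $a \in C^*_r(G)_\alpha$ and $b \in C^*_r(G)_\beta$, pick $a_n \in A(G)_\alpha$ and $b_n \in A(G)_\beta$ with $a_n \to a$ and $b_n \to b$. Then $a_n b_n \in A(G)_\alpha \cdot A(G)_\beta \subseteq A(G)_{\alpha\beta}$, and joint continuity of multiplication gives $a_n b_n \to ab$, so $ab \in \overline{A(G)_{\alpha\beta}} = C^*_r(G)_{\alpha\beta}$. Condition~(2) follows similarly from the fact that the involution on $C^*_r(G)$ is isometric, together with the pointwise check that $f^* \in A(G)_{\alpha^{-1}}$ whenever $f \in A(G)_\alpha$; the latter uses $c(g^{-1}) = c(g)^{-1}$ to see that $\supp f^* \subseteq G_{\alpha^{-1}}$. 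Condition~(3) is immediate once we know linear independence, since $A(G) = \bigoplus_\gamma A(G)_\gamma$ is already dense in $C^*_r(G)$ and sits inside $\bigoplus_\gamma C^*_r(G)_\gamma$.

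The main step is linear independence. I would first observe that if $a \in C^*_r(G)_\gamma$ with approximants $a_n \in A(G)_\gamma$, then $j(a_n) \to j(a)$ in $\|\cdot\|_\infty$, and since $j$ restricts to the identity on $A(G)$, each function $j(a_n) = a_n$ vanishes outside $G_\gamma$. A uniform limit of functions vanishing on $G \setminus G_\gamma$ vanishes there too, so $j(a)$ is supported in $G_\gamma$. Because $\Gamma$ is discrete and $c$ is continuous, the sets $G_\gamma = c^{-1}(\gamma)$ are pairwise disjoint, so if $a_1 + \cdots + a_n = 0$ with $a_i \in C^*_r(G)_{\gamma_i}$ and $\gamma_1, \ldots, \gamma_n$ distinct, then applying $j$ and evaluating at any point of $G_{\gamma_i}$ gives $j(a_i) \equiv 0$, whence $a_i = 0$ by injectivity of $j$. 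This yields both linear independence and, in combination with the density of $A(G)$, condition~(3).

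I expect the passage through $j$ to be the only nontrivial ingredient: without it, one has no obvious way to separate the closures $\overline{A(G)_\gamma}$ inside $C^*_r(G)$, since the algebraic grading argument on $A(G)$ uses pointwise disjointness of supports, which is precisely the structure that $j$ recovers on the $C^*$-completion.
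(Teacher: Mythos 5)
Your proposal is correct and follows essentially the same route as the paper's proof: continuity of multiplication and the involution handles conditions (1) and (2), and the injective continuous map $j$ into $\mathcal{B}(G)$ is used to show each $C^*_r(G)_\gamma$ maps into the closed subspace of functions supported in $G_\gamma$, from which linear independence and then density of the direct sum follow. The paper phrases the support step slightly more compactly (noting that $\mathcal{B}(G_\gamma)$ is $\|\cdot\|_\infty$-closed so the claim follows from continuity of $j$), but the argument is the same.
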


\begin{proof}
Fix $\alpha, \beta \in \Gamma$.  
Because   
$A(G)_{\alpha}\cdot A(G)_{\beta}$ is contained in the closed set $C^*_r(G)_{\alpha\beta}$,  
\[C^*_r(G)_{\alpha}\cdot C^*_r(G)_{\beta} \subseteq C^*_r(G)_{\alpha\beta}\] by the continuity of multiplication.
Similarly \[C^*_r(G)_{\gamma}^* \subseteq C^*_r(G)_{\gamma^{-1}}\]
 by the continuity of the involution $*$.
Next we verify that each $C^*_r(G)_{\gamma}$ is a linearly independent subspace. 
To do this, we claim that 
\[j(C^*_r(G)_{\gamma}) \subseteq \mathcal{B}(G_{\gamma}).\]   We have that $j$ is the identity map on $A(G)_{\gamma} \subseteq \mathcal{B}(G_{\gamma})$.   It is straightforward to check that $\mathcal{B}(G_{\gamma})$ is closed in $\mathcal{B}(G)$ with respect to $\| \cdot \|_{\infty}$ and hence the claim follows from the continuity $j$.

Now consider a finite linear combination $\sum a_i = 0$ where each $a_i \in C^*_r(G)_{\gamma_i}$ and the $\gamma_i$ are distinct elements of $\Gamma$. By linearity 
\[0=j(\sum a_i) = \sum j(a_i).\]  
Since the supports of $j(a_i)$ are all disjoint, $j(a_i)=0$ for every $i$.
Thus $a_i = 0$ because $j$ is an injective linear map.

Finally notice
\[A(G) = \bigoplus_{\gamma \in \Gamma}A(G)_{\gamma} \subseteq \bigoplus_{\gamma \in \Gamma}C^*_r(G)_{\gamma}
\subseteq C^*_r(G).\] Thus since $A(G)$ is dense in $C^*_r(G)$ by \cite[Proposition~6.7]{Steinberg}, we have that 
\[\bigoplus_{\gamma \in \Gamma}C^*_r(G)_{\gamma} \text{ is dense in }C^*_r(G).\]
\end{proof}

\begin{thm}
\label{thm:gpd}
Let $G$ be an ample groupoid such that $G^{(0)}$ is Hausdorff and let $c:G \to \Gamma$ be a continuous cocycle into a discrete group $\Gamma$.  The following are equivalent:
\begin{enumerate}
\item\label{it1:gpd} $G$ is strongly $\Gamma$-graded groupoid in the sense that $G_{\alpha}G_{\beta}=G_{\alpha\beta}$ for all $\alpha, \beta \in \Gamma$;
\item\label{it2:gpd} $A(G)$ is a strongly $\Gamma$-graded ring;
\item\label{it3:gpd} $C^*_r(G)$ is a strongly $\Gamma$-graded $C^*$-algebra.
\end{enumerate}
\end{thm}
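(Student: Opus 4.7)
Since conditions (1) and (2) are shown equivalent in \cite[Theorem~3.11]{CHR}, I plan to close the loop by proving (2) $\Rightarrow$ (3) and (3) $\Rightarrow$ (1).

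The implication (2) $\Rightarrow$ (3) is short. Fix $\alpha, \beta \in \Gamma$. Hypothesis (2) gives $A(G)_\alpha \cdot A(G)_\beta = A(G)_{\alpha\beta}$, which by Proposition~\ref{prop:gd} is dense in $C^*_r(G)_{\alpha\beta}$. The chain
\[A(G)_\alpha \cdot A(G)_\beta \subseteq C^*_r(G)_\alpha \cdot C^*_r(G)_\beta \subseteq C^*_r(G)_{\alpha\beta}\]
then forces the middle term to be dense in the right, giving strong $C^*$-grading.

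The implication (3) $\Rightarrow$ (1) is where the real work lies. Fix $\alpha, \beta \in \Gamma$ and $g \in G_{\alpha\beta}$; the goal is to find $h \in G_\alpha$ and $k \in G_\beta$ with $g = hk$. Because $\Gamma$ is discrete and $c$ is continuous, $G_{\alpha\beta}$ is clopen, so the ample basis provides a compact open bisection $B$ with $g \in B \subseteq G_{\alpha\beta}$; thus $1_B \in A(G)_{\alpha\beta} \subseteq C^*_r(G)_{\alpha\beta}$. A brief argument using density of $A(G)_\gamma$ in $C^*_r(G)_\gamma$ and continuity of multiplication shows that $A(G)_\alpha \cdot A(G)_\beta$ is dense in $C^*_r(G)_\alpha \cdot C^*_r(G)_\beta$, which by (3) is in turn dense in $C^*_r(G)_{\alpha\beta}$. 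I therefore select $h_n \in A(G)_\alpha \cdot A(G)_\beta$ with $h_n \to 1_B$ in the reduced $C^*$-norm.

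The map $j$ of \cite[Proposition~4.4.2]{renault} is continuous and restricts to the identity on $A(G)$, so this converts to uniform convergence $h_n \to 1_B$ on $G$; in particular $h_n(g) \to 1$. Each $h_n$ has the form $\sum_i a_{n,i} b_{n,i}$ with $a_{n,i} \in A(G)_\alpha$ and $b_{n,i} \in A(G)_\beta$, and the Steinberg convolution formula $(a_{n,i} b_{n,i})(x) = \sum_{yz=x} a_{n,i}(y) b_{n,i}(z)$ forces $\supp h_n \subseteq G_\alpha G_\beta$. Choosing $n$ large enough that $h_n(g) \neq 0$ then places $g$ in $G_\alpha G_\beta$, as needed. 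The main obstacle is the qualitative one of extracting a pointwise statement about $g$ from a norm approximation of $1_B$; this is precisely what the map $j$ accomplishes, and once it is in hand the rest is a matter of tracking supports through the convolution product.
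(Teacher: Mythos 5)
Your proposal is correct and follows essentially the same route as the paper: your (2) $\Rightarrow$ (3) is the paper's density argument almost verbatim, and your direct proof of (3) $\Rightarrow$ (1) is the contrapositive of the paper's hard direction, resting on the same two tools --- the injective continuous map $j$ to upgrade convergence in the reduced norm to pointwise convergence, and the observation (the paper's Lemma~\ref{lem:agag}) that every element of $A(G)_{\alpha}\cdot A(G)_{\beta}$ is supported in $G_{\alpha}G_{\beta}$. The only cosmetic difference is that the paper closes the loop as $\neg(2)\Rightarrow\neg(1)\Rightarrow\neg(3)$, using \cite[Lemma~3.1]{CHR} to reduce to a single witness $x\in G_e\setminus G_{\gamma}G_{\gamma^{-1}}$, whereas you argue directly for arbitrary $\alpha,\beta\in\Gamma$ and arbitrary $g\in G_{\alpha\beta}$, which is equally valid.
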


\begin{rmk}
Using \cite[Theorem~3.11]{CHR}, we could replace item~\eqref{it2:gpd} in Theorem~\ref{thm:gpd} with the equivalent statement:  $A_R(G)$ is a strongly $\Gamma$-graded ring
for any commutative ring $R$ with identity.    
\end{rmk}

The equivalence of \eqref{it1:gpd} and \eqref{it2:gpd} is established in \cite[Theorem~3.11]{CHR}.  We establish the equivalence of \eqref{it2:gpd} and \eqref{it3:gpd}.
That \eqref{it3:gpd} implies \eqref{it2:gpd} is straightforward:  Take $\alpha, \beta \in \Gamma$.  Since $C^*_r(G)_{\alpha\beta}$ is closed and $C^*_r(G)$ is $\Gamma$-graded, 
\[C^*_r(G)_{\alpha}\cdot C^*_r(G)_{\beta}\]
 trivially has closure in  $C^*_r(G)_{\alpha\beta}$.
To see that this closure is all of  $C^*_r(G)_{\alpha\beta}$ , take $a \in  C^*_r(G)_{\alpha\beta}$.  Since 
 $C^*_r(G)_{\alpha\beta} = \overline{A(G)_{\alpha\beta}}$, there exist $f_n \in A(G)_{\alpha\beta}$ such that 
$f_n \to a$ in $C^*_r(G)$.  Since $A(G)$ is strongly graded, we have 
\[f_n \in A(G)_{\alpha}\cdot A(G)_{\beta} \subseteq C^*_r(G)_{\alpha}\cdot C^*_r(G)_{\beta}\]
for each $n$ 
and hence the limit $a$ belongs to 
\[\overline{ C^*_r(G)_{\alpha}\cdot C^*_r(G)_{\beta}}.\]

For the reverse implication, we use the following lemma.

\begin{lem}
\label{lem:agag}
Let $G$ be an ample groupoid such that $G^{(0)}$ is Hausdorff and let $c:G \to \Gamma$ be a continuous cocycle into a discrete group $\Gamma$.  Then for $\alpha,\beta \in \Gamma$ we have 
\[A(G)_{ \alpha}\cdot A(G)_{\beta} \subseteq \{f \in A(G) :  f(x) \neq 0 \implies x \in G_{\alpha}G_{\beta}\}.\]
\end{lem}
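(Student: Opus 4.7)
The plan is to reduce to the spanning elements $1_B$ of $A(G)$ and then compute the convolution product directly. A general element $f \in A(G)_\alpha$ is a finite linear combination of indicator functions $1_B$ where $B$ is a compact open bisection of $G$, and since $f \in A(G)_\alpha$ means $\supp f \subseteq G_\alpha$, one can arrange (by refining the bisections using the compact-open basis and the continuity of $c$, together with the fact that $c^{-1}(\alpha)$ is clopen because $\Gamma$ is discrete) that each $B$ is contained in $G_\alpha$. The same reduction applies to any $g \in A(G)_\beta$, producing bisections $D \subseteq G_\beta$.

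Given this reduction, the product $f \cdot g$ becomes a finite $\C$-linear combination of products $1_B \cdot 1_D$, which by the convolution formula recorded earlier in the excerpt equals $1_{BD}$. The support of $1_{BD}$ is contained in $BD$, which in turn is contained in $G_\alpha G_\beta$ because $B \subseteq G_\alpha$ and $D \subseteq G_\beta$. Taking a linear combination, any element $h \in A(G)_\alpha \cdot A(G)_\beta$ has support contained in a finite union of such product sets $BD$, hence in $G_\alpha G_\beta$. This gives exactly the containment claimed.

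I do not expect a real obstacle here. The only point that needs a careful word is the reduction step: one must note that because $c$ is continuous and $\Gamma$ is discrete, the level set $G_\gamma = c^{-1}(\gamma)$ is clopen in $G$, so any compact open bisection $B$ appearing in a representation of $f \in A(G)_\alpha$ can be replaced by the compact open bisection $B \cap G_\alpha$ without changing $f$. After that, the rest is bookkeeping with the convolution formula $1_B 1_D = 1_{BD}$ and the set-theoretic inclusion $BD \subseteq G_\alpha G_\beta$.
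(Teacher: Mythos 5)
Your proof is correct and follows essentially the same route as the paper: reduce to spanning elements $1_B$ with $B\subseteq G_\alpha$ and $1_D$ with $D\subseteq G_\beta$, apply $1_B1_D=1_{BD}$ with $BD\subseteq G_\alpha G_\beta$, and note that the support of a sum lies in the union of the supports. The only difference is that the paper cites \cite[bottom of page 53]{CHR} for the fact that $A(G)_\alpha\cdot A(G)_\beta$ is spanned by such $1_{BD}$, whereas you prove that reduction directly (correctly, via the clopenness of $G_\alpha=c^{-1}(\alpha)$).
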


\begin{proof}
Observe that 
$A(G)_{ \alpha}\cdot A(G)_{\beta}$ is equal to
\[\label{eq:agag}
 \lsp\{1_{BD}: B \subseteq G_{\alpha} \text{ and } 
D \subseteq G_{\beta} \text{ are compact open bisections}\} 
\]
(see \cite[bottom of page 53]{CHR}).  
Since the support of a sum of functions is contained in the union of the supports of the summands, the containment follows.  
\end{proof}

\begin{proof}[Proof of Theorem~\ref{thm:gpd}]
To complete the proof, suppose $A(G)$ is not strongly graded.  Then $G$ is not strongly graded by \cite[Theorem~3.11]{CHR}, that is, by \eqref{it1:gpd} $\implies$ \eqref{it2:gpd} of Theorem~\ref{thm:gpd}.
  Then there exists $\gamma \in \Gamma$ and $x \in G_e \setminus G_{\gamma}G_{\gamma^{-1}}$
  by \cite[Lemma~3.1]{CHR}. Since $c$ is continuous, $G_e$ is open so we can find $B \subseteq G_e$ a compact open bisection containing $x$.  Since $1_B(x)=1$,  \[1_B \in A(G)_e \setminus A(G)_{\gamma}A(G)_{\gamma^{-1}}\] by Lemma~\ref{lem:agag}.  
 We show 
 \[1_B \notin \overline{A(G)_{\gamma}A(G)_{\gamma^{-1}}}\] 
using an adaptation of the argument in \cite[Proposition~3.3(i)]{CF}.   
By way of contradiction, suppose there exists a sequence \[(f_n) \subseteq A(G)_{\gamma} \cdot A(G)_{\gamma^{-1}}\] 
that converges to $1_B$ in $C^*_r(G)$.   Again we apply the injective continuous linear map $j:C^*_r(G) \to \mathcal{B}(G)$.   Since $j$ restricts to the identity map on $A(G)$ we have $j(f_n)=f_n \to j(1_B)=1_B$ in $\| \cdot \|_{\infty}$.  Recall that $1_B(x) \neq 0$.  However, $x \notin G_{\gamma}G_{\gamma^{-1}}$ so 
$f_n(x) = 0$ for all $n \in \mathbb{N}$ by Lemma~\ref{lem:agag}, which is a contradiction.  Therefore 
 \[f \notin \overline{A(G)_{\gamma}A(G)_{\gamma^{-1}}}\]  and $C^*_r(G)$ is not a strongly $\Gamma$-graded $C^*$-algebra.
\end{proof}

\section{Leavitt path algebras}

We want to connect properties of $C^*(E)$ and properties of $L_\C(E)$, and hence we want to able to view $L_{\C}(E)$ as a subalgebra of $C^*(E)$ using the homomorphism $\iota$ which takes a spanning element $\alpha\beta^*$ to the element $s_{\alpha}s_{\beta}^*$ of $C^*(E)$. That $\iota$ is an injection was established (for example) in \cite[\S1.3]{APS}, but with an annoying hypothesis of ``no sources''. So we pause to prove the following more general result.

\begin{prop}
Suppose that $E$ is a directed graph. Then the homomorphism $\iota:L_{\C}(E)\to C^*(E)$ is an isomorphism of $L_{\C}(E)$ onto the $*$-subalgebra 
\[
A:=\lsp\big\{s_{\alpha}s_{\beta}^*:\alpha,\beta\in E^*\text{ and }s(\alpha)=s(\beta)\big\}\quad\text{of } C^*(E).
\]
\end{prop}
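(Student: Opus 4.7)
The plan is to peel off the easy parts and reduce the proposition to the injectivity of $\iota$, which I would then establish via a graded uniqueness theorem.

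First I would dispense with existence and surjectivity onto $A$. The Cuntz--Krieger generators $\{s_v,s_e\}$ in $C^*(E)$ satisfy (in particular) the Leavitt relations, so the universal property of $L_\C(E)$ produces a unique $*$-algebra homomorphism $\iota:L_\C(E)\to C^*(E)$ sending $\alpha\beta^*$ to $s_\alpha s_\beta^*$. Since $L_\C(E)$ is by definition spanned by the monomials $\alpha\beta^*$ with $s(\alpha)=s(\beta)$, the image of $\iota$ is precisely $A$. So the only substantive claim is injectivity.

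For injectivity I would invoke the graded uniqueness theorem for Leavitt path algebras of arbitrary directed graphs (Tomforde's theorem). The algebra $L_\C(E)$ carries its canonical $\Z$-grading with each $\alpha\beta^*$ homogeneous of degree $|\alpha|-|\beta|$. On the $C^*$-algebra side, the gauge action $\gamma:\T\to\Aut C^*(E)$ gives a $\Z$-grading on $C^*(E)$ via $C^*(E)_n=\{a:\gamma_z(a)=z^n a\text{ for all }z\in\T\}$, and a one-line computation shows $\gamma_z(s_\alpha s_\beta^*)=z^{|\alpha|-|\beta|}s_\alpha s_\beta^*$. Hence $\iota$ is a graded $*$-homomorphism. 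Each vertex projection $s_v$ is nonzero in $C^*(E)$ (for instance, by the standard path-space representation), so $\iota(v)\neq 0$ for every $v\in E^0$. The graded uniqueness theorem then forces $\iota$ to be injective.

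The only point requiring care is making sure that the version of the graded uniqueness theorem I cite genuinely applies to arbitrary graphs, with no hidden restriction such as "no sources" or "Condition~(L)"; Tomforde's formulation is valid in exactly this generality, which is what allows us to drop the "annoying hypothesis" from \cite[\S1.3]{APS}. No other step looks difficult, and the main payoff is the clean identification of $L_\C(E)$ with the $*$-subalgebra $A\subseteq C^*(E)$ used throughout the rest of the section.
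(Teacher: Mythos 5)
Your overall strategy --- reduce everything to injectivity and get that from the graded uniqueness theorem for Leavitt path algebras plus the nonvanishing of the vertex projections --- is exactly the paper's. But there is a genuine gap at the one point where the argument needs care, and it is not the point you flagged. The graded uniqueness theorem in the form you want to cite (Tomforde's, \cite[Theorem~2.2.15]{AAM}) requires the \emph{codomain} to be a $\Z$-graded \emph{ring}, i.e.\ to decompose as an algebraic direct sum $R=\bigoplus_n R_n$ with $R_mR_n\subseteq R_{m+n}$. The gauge-action subspaces $C^*(E)_n=\{a:\gamma_z(a)=z^na \text{ for all } z\in\T\}$ do not give such a decomposition of $C^*(E)$: their algebraic direct sum is only \emph{dense} in $C^*(E)$, not equal to it (think of $C(\T)$ versus the trigonometric polynomials). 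So as written, ``the gauge action gives a $\Z$-grading on $C^*(E)$, hence the graded uniqueness theorem forces $\iota$ to be injective'' applies the theorem to a codomain that does not satisfy its hypotheses. This is precisely the obstruction the paper points out at the start of its proof.

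The repair is small but it is the actual content of the paper's argument: corestrict $\iota$ to its image $A$, and observe that $A$ \emph{is} algebraically $\Z$-graded, with $A_n=\lsp\{s_\alpha s_\beta^*:s(\alpha)=s(\beta)\text{ and }|\alpha|-|\beta|=n\}$. (Equivalently one could corestrict to the algebraic direct sum $\bigoplus_n C^*(E)_n$ inside $C^*(E)$.) Then \cite[Theorem~2.2.15]{AAM} applies to $\iota:L_{\C}(E)\to A$, and combined with $p_v\neq 0$ for all $v\in E^0$ --- which you justify essentially as the paper does --- it gives injectivity; surjectivity onto $A$ is clear, as you say. The point you did single out, that the uniqueness theorem must hold for arbitrary graphs with no ``no sources'' or Condition~(L) hypothesis, is correct but is not where the difficulty lies.
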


\begin{proof}
We want to apply the Graded Uniqueness Theorem for Leavitt path algebras \cite[Theorem~2.2.15]{AAM}, but $C^*(E)$ is not $\Z$-graded in the algebraic sense, and hence that result does not apply to $\iota:L_{\C}(E)\to C^*(E)$. However, the $*$-subalgebra $A$ is $\Z$-graded, with 
\[
A_n=\lsp\big\{s_{\alpha}s_{\beta}^*:s(\alpha)=s(\beta)\text{ and }|\alpha|-|\beta|=n\big\}.
\]
Since we can always find Cuntz-Krieger $E$-families $\{S,P\}$ with $P_v\not=0$ for every $v\in E^0$ (see the top of \cite[page~8]{R}), every $p_v$ is non-zero. Thus \cite[Theorem~2.2.15]{AAM} implies that $\iota$ is an injection of $L_{\C}(E)$ into $A$; since every $s_\alpha s_{\beta}^*$ belongs to the range of $\iota$, it is an isomorphism, as claimed.
\end{proof}

 Next we relate the grading spaces $C^*(E)_n$ for the $C^*$-algebra to those of the Leavitt path algebra. We define $\Phi_n:C^*(E)\to C^*(E)$ in terms of the gauge action $\gamma:\T\to \Aut C^*(E)$ by 
\[
\Phi_n(a)=\int_{\T}w^{-n}\gamma_w(a)\,dw.
\]
Left invariance of the Haar integral on $\T$ implies that $\Phi_n$ is a norm-decreasing $C^*(E)^\gamma$-bilinear map of $C^*(E)$ onto $C^*(E)_n$ such that
\[
\Phi_n(s_\mu s_\nu^*)=\begin{cases}s_\mu s_\nu^*&\text{if $|\mu|-|\nu|=n$, and}\\
0&\text{otherwise.}
\end{cases}
\]
The map $\Phi_n\circ\iota$ is a bounded linear map of $\LC$ onto a dense subspace of $C^*(E)_n$, and its restriction to $\LC_n$ is an injection of $\LC_n$ onto this dense subspace of $C^*(E)_n$. Hence, modulo the canonical injection $\iota$ of $\LC$ in $C^*(E)$, $C^*(E)_n$ is the closure of $\LC_n$. We suppress $\iota$, and write $C^*(E)_n=\overline{\LC_n}$.

\begin{prop}\label{C*-ring}
Suppose that $E$ is a directed graph. Then $C^*(E)$ is a strongly $\Z$-graded $C^*$-algebra if and only if $\LC$ is a strongly $\Z$-graded ring. 
\end{prop}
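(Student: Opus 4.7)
The plan is to realise both $C^*(E)$ and $\LC$ as the reduced $C^*$-algebra and Steinberg algebra of a single $\Z$-graded ample groupoid, and then invoke Theorem~\ref{thm:gpd}.

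First I would bring in the boundary path groupoid $G_E$ of $E$ (defined for an arbitrary directed graph in, e.g., work of Paterson, Yeend, and Kumjian--Pask--Raeburn--Renault). This is a Hausdorff, ample, amenable groupoid whose unit space is the boundary path space $\partial E$, with a basis of compact open bisections of the form $Z(\alpha,\beta)$ indexed by pairs $\alpha,\beta\in E^*$ with $s(\alpha)=s(\beta)$. The length function $c:G_E\to\Z$, $c(x,n,y)=n$, is a continuous cocycle with $Z(\alpha,\beta)\subseteq c^{-1}(|\alpha|-|\beta|)$, so by the discussion preceding Proposition~\ref{prop:gd}, $c$ makes $G_E$ into a $\Z$-graded groupoid.

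Second I would cite the known isomorphisms
\[
\LC\cong A(G_E)\quad\text{and}\quad C^*(E)\cong C^*(G_E)=C^*_r(G_E),
\]
where the first sends $s_\alpha s_\beta^*\mapsto 1_{Z(\alpha,\beta)}$ and the second equality uses the amenability of $G_E$. Both isomorphisms intertwine the gradings: on the algebraic side this is immediate from $Z(\alpha,\beta)\subseteq c^{-1}(|\alpha|-|\beta|)$, while on the $C^*$-side the gauge action of $\T$ on $C^*(E)$ corresponds under the isomorphism to the action induced by $c$ on $C^*_r(G_E)$, so the homogeneous subspaces agree, namely $C^*(E)_n\cong C^*_r(G_E)_n$ and $\LC_n\cong A(G_E)_n$.

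Once these identifications are in place, the proposition becomes exactly the equivalence \eqref{it2:gpd}$\Leftrightarrow$\eqref{it3:gpd} of Theorem~\ref{thm:gpd} applied to $G_E$ with $\Gamma=\Z$. The main obstacle, and really the only non-cosmetic step, is verifying the compatibility of gradings: one must check that the inclusion $\iota:\LC\hookrightarrow C^*(E)$ of the preceding proposition matches the canonical inclusion $A(G_E)\hookrightarrow C^*_r(G_E)$ under the isomorphism $\LC\cong A(G_E)$. This reduces to a computation on the generators $s_\alpha s_\beta^*\leftrightarrow 1_{Z(\alpha,\beta)}$, after which the identification $C^*(E)_n=\overline{\LC_n}\cong\overline{A(G_E)_n}=C^*_r(G_E)_n$ extends by continuity, and the argument is complete.
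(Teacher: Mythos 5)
Your proposal is correct and follows essentially the same route as the paper: the authors also pass to the boundary path groupoid $G_E$ (via their Lemma~\ref{lem:help}), establish a grading-preserving isomorphism $C^*(E)\cong C^*(G_E)$ restricting to a graded ring isomorphism $\LC\cong A(G_E)$, and then apply the equivalence \eqref{it2:gpd}$\Leftrightarrow$\eqref{it3:gpd} of Theorem~\ref{thm:gpd}. The compatibility check you flag as the main obstacle is exactly what the paper's lemma carries out, by constructing mutually inverse homomorphisms from the universal properties and verifying them on generators.
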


With the following lemma, the proof of Proposition~\ref{C*-ring} is an immediate corollary to Theorem~\ref{thm:gpd}.

\begin{lem}
\label{lem:help}
For any directed graph $E$, there is an ample Hausdorff amenable groupoid $G_E$ and an isomorphism $\pi:C^*(E) \to C^*(G_E)$ such that $\pi(C^*(E)_n) = C^*(G_E)_n$,  and such that 
the restriction of $\pi$ to $L_{\C}(E)$ is a $\Z$-graded (ring) isomorphism of $L_{\C}(E)$ onto $A(G_E)$.
\end{lem}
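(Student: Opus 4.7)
The plan is to let $G_E$ be the standard boundary-path groupoid of $E$ and to import its required properties from the existing literature on graph groupoids. Concretely, $G_E$ is built from the boundary-path space $\partial E$ of $E$ (infinite paths together with finite paths ending at sinks or infinite emitters) as the set of triples $(x,k,y)$ with $x,y\in\partial E$ and $k\in\Z$ for which there exist $\mu,\nu\in E^*$ and $z\in\partial E$ with $x=\mu z$, $y=\nu z$ and $k=|\mu|-|\nu|$; the compact open bisections
\[
Z(\mu,\nu):=\{(\mu z,|\mu|-|\nu|,\nu z):z\in\partial E,\ r(z)=s(\mu)=s(\nu)\}
\]
give $G_E$ an ample Hausdorff topology, and $G_E$ is amenable. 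These facts are recorded in \cite{CEPSS} and the references there. I would then fix the continuous cocycle $c:G_E\to\Z$, $c(x,k,y)=k$, which takes the constant value $|\mu|-|\nu|$ on each $Z(\mu,\nu)$ and yields the $\Z$-grading of $A(G_E)$ appearing in Proposition~\ref{prop:gd}.

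Next I would construct $\pi$. The family $S_e:=1_{Z(e,s(e))}$, $P_v:=1_{Z(v)}$ is a Cuntz--Krieger $E$-family in $C^*(G_E)$ with every $P_v\neq 0$, so the gauge-invariant uniqueness theorem for $C^*(E)$ delivers an isomorphism $\pi:C^*(E)\to C^*(G_E)$ such that $\pi(s_e)=S_e$, $\pi(p_v)=P_v$, and hence $\pi(s_\mu s_\nu^*)=1_{Z(\mu,\nu)}$ for all admissible $\mu,\nu\in E^*$. Amenability of $G_E$ identifies $C^*(G_E)$ with $C^*_r(G_E)$, so Proposition~\ref{prop:gd} applies to the target.

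To upgrade $\pi$ to a graded isomorphism, I would check that $\pi$ intertwines the gauge action $\gamma$ on $C^*(E)$ with the canonical $\T$-action on $C^*_r(G_E)$ defined on $A(G_E)$ by $(z\cdot f)(g)=z^{c(g)}f(g)$; both actions scale $1_{Z(\mu,\nu)}$ by $z^{|\mu|-|\nu|}$, which suffices by density. Fourier-averaging against $z^{-n}$ over $\T$ then gives $\pi(C^*(E)_n)=C^*_r(G_E)_n=\overline{A(G_E)_n}$. Restricting $\pi$ to the dense $*$-subalgebra $L_\C(E)\subseteq C^*(E)$ yields the Steinberg-to-Leavitt isomorphism sending $\alpha\beta^*$ to $1_{Z(\alpha,\beta)}$, and it is visibly $\Z$-graded since $1_{Z(\alpha,\beta)}\in A(G_E)_{|\alpha|-|\beta|}$.

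The main obstacle is organizational rather than mathematical: assembling the correct general-graph boundary-path groupoid (handling sinks and infinite emitters without extra hypotheses) and confirming its amenability from the cited sources. Once these are in place, the grading statement and the algebraic identification reduce to the single explicit formula $\pi(s_\mu s_\nu^*)=1_{Z(\mu,\nu)}$ together with a routine density argument.
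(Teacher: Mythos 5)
Your proposal is correct, and it reaches the same groupoid $G_E$ and the same Cuntz--Krieger family $P_v=1_{Z(v)}$, $S_e=1_{Z(e,s(e))}$ as the paper, but it establishes that $\pi$ is an isomorphism by a genuinely different mechanism. The paper does not invoke the gauge-invariant uniqueness theorem at all: it uses Paterson's correspondence between representations of $C^*(E)$ and representations of $C_c(G_E)$ to produce an explicit homomorphism $\psi:C^*(G_E)\to C^*(E)$ with $\psi(P_v)=p_v$ and $\psi(S_e)=s_e$, and then checks by density that $\pi\circ\psi$ and $\psi\circ\pi$ are the identity maps; the grading statement $\pi(C^*(E)_n)=C^*(G_E)_n$ then falls out of continuity of $\pi$ and $\psi$ together with $C^*(E)_n=\overline{L_\C(E)_n}$ and $\pi(L_\C(E)_n)=A(G_E)_n$. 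Your route instead builds the dual $\T$-action on $C^*_r(G_E)$ from the cocycle $c$, checks the intertwining on the spanning elements $1_{Z(\mu,\nu)}$, and gets injectivity from the gauge-invariant uniqueness theorem (with surjectivity from density of $A(G_E)$); the grading is then obtained by Fourier averaging. Both are complete. Your approach buys independence from Paterson's representation-theoretic results and gives the equality of graded pieces more directly via equivariance, at the cost of needing the arbitrary-graph version of the gauge-invariant uniqueness theorem and the construction of the dual action on the reduced algebra; the paper's approach avoids any action on the groupoid side and produces an explicit inverse, but must quote Paterson's theorem and track which representation corresponds to which. One small point to make explicit in your write-up: $P_v\neq 0$ because $Z(v)$ is a nonempty compact open subset of the unit space and $A(G_E)$ embeds in $C^*_r(G_E)$, and the identification of $\operatorname{span}\{1_{Z(\mu,\nu)}\}$ with all of $A(G_E)$ (needed for surjectivity) is exactly the content of the cited isomorphism $L_\C(E)\cong A(G_E)$.
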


\begin{rmk}
Because $G_E$ is amenable, the full and reduced $C^*$-algebras are equal so we drop the \emph{r} subscript from the $C^*_r(G_E)$ from now on.
\end{rmk}

\begin{proof}
Let $G_E$ be the \emph{boundary path groupoid of $E$} as defined by Paterson on page 653 of \cite{P}\footnote{Paterson calls $G_E$ the \emph{path groupoid}}.  Then $G_E$ is ample and Hausdorff; see \cite[Theorem~2.4]{Rigby} for a nice proof of this.  The groupoid $G_E$ is amenable by \cite[Theorem~4.2]{P}.  We view $C^*(G_E)$ as the universal groupoid $C^*$-algebra.  In this sense, there is a representation $\pi_{max}:C_c(G_E) \to C^*(G_E)$ that is universal for representations of $C_c(G_E)$ and $\pi_{max}(C_c(G_E)$ is dense in $C^*(G_E)$.  
See \cite[Theorem~3.2.2]{Aidan} for more details.  
Since we view $A(G_E) \subseteq C_c(G_E)$ as subsets of $C^*(G_E)$, we identify $C_c(G_E)$ with $\pi_{max}(C_c(G_E)$ inside $C^*(G_E)$,which we can do because $\pi_{max}$ is injective by \cite[Corollary~3.3.4]{Aidan}.

One can show that the collection 
\[P_v:= 1_{Z(v)}, \quad S_e:= 1_{Z(e,s(e)}\]
for $v \in E^0$ and $e \in E^1$ is a Cuntz-Krieger $E$-family in $C^*(G_E)$.  The universal property of $C^*(E)$ gives a homomorphism $\pi:C^*(E) \to C^*(G_E)$ such that 
\[ \pi(p_v) = P_v \quad \text{for $v \in E^0$ and} \quad \pi(s_e)= S_e \text{ for $ e \in E^1 $}. \] 
(See \cite[page~42]{R}.)  Notice that the restriction of $\pi$ to $L_{\C}(E)$ is precisely the $\Z$-graded ring isomorphism from $L_{\C}(E)$  onto $A(G_E)$ in \cite[Example~3.2]{CS},  and hence  $\pi(\LC_n) = A(G_E)_n$.

Paterson shows in in \cite[Theorem~3.8]{P} and \cite[Theorem~3.1]{Pbook} that representations of $C^*(E)$ are in bijective correspondence with representations of $C_c(G_E)$ .  
So the universal representation $p_v, s_e$ of $C^*(E)$ corresponds to a representation
$\phi:C_c(G_E) \to C^*(E)$: indeed, by looking at the details of the proof of \cite[Theorem~3.8]{P}, we see that $\phi(P_v) = p_v$ and $\phi(S_e) = s_e$ for $v \in E^0$ and $e \in E^1$.
The universal property of $C^*(G_E)$ \cite[Theorem~3.2.2]{Aidan}  gives a homomorphism 
$\psi:C^*(G_E) \to C^*(E)$ such that $\psi = \phi$ on $C_c(G_E)$.
  
Notice that $\pi \circ \phi$ and $\phi \circ \pi$ restrict to identity maps on $A(G_E)$ and $\LC$ respectively.  We show that $\pi$ is a bijection by showing that $\pi \circ \psi$ and $\psi \circ \pi$ are the identity maps on $C^*(G_E)$ and $C^*(E)$ respectively.
Fix $a \in C^*(G_e)$.  Since $A(G_E)$ is dense in $C^*(G_E)$ by \cite[Proposition~6.7]{Steinberg},  $a = \lim f_n$ where $f_n \in A(G_E)$.  Then using that homomorphisms betweeen $C^*$-algebras are automatically continuous, we have
\[\pi(\psi(a))= \pi(\psi(\lim f_n)) = \lim \pi(\psi(f_n)) =\lim \pi(\phi(f_n)) = \lim f_n =a.\]
For the other direction, fix $b \in C^*(E)$.  Then $b = \lim b_n$ where $b_n \in \LC$, and
recalling that $\pi$ takes elements of $\LC$ to elements of $A(G_E)$, we deduce that 
\[\psi(\pi(b))= \psi(\pi(\lim(b_n))) = \lim(\psi(\pi(b_n))) = \lim(\phi(\pi(b_n))) =  \lim b_n = b.\]
Thus $\pi$ is an isomorphism with inverse $\psi$.    
Continuity of $\pi$ and $\psi$  imply $\pi(C^*(E)_n) = C^*_r(G_E)_n$,  and this completes the proof.
\end{proof}
 
\begin{proof}[Proof of Proposition~\ref{C*-ring}]
Because the restricted map $\pi$ from Lemma~\ref{lem:help} is a graded ring isomorphism,  $\LC$ is strongly $\Z$-graded if and only if $A(G_E)$ is strongly $\Z$-graded.  Now Theorem~\ref{thm:gpd} says this is equivalent to  $C^*(G_E)$ being  strongly $\Z$-graded.    Since $\pi$  preserves the $C^*$-grading,  the result follows.
\end{proof}

Now we have the peices needed to prove Theorem~\ref{mainthm}.  Recall that \cite[Theorem~4.2]{CHR} says $\LC$ is strongly $\Z$-graded if and only if $E$ is row-finite, has no sources and has property Y.  Thus Theorem~\ref{mainthm} follows from Proposition~\ref{C*-ring} above and \cite[Theorem~4.2]{CHR} 

In strongly graded algebras,  a lot of the structure of the algebra can be seen in the core, that is, in the homogeneous component of the identity $e$.  Thus we conclude by establishing that the core of a Leavitt path algebra is build from finite dimensional subalgebras in the following way. 

\begin{prop}\label{aisinfinitesub}
Suppose that $E$ is a directed graph and $K$ is a field. Then every $a\in \LK_0$ belongs to a finite-dimensional $*$-subalgebra of $\LK_0$.
\end{prop}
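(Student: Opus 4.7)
My plan is to find, for each $a \in \LK_0$, a finite set $\mathcal{S}$ of paths in $E^*$ such that
\[
A := \lsp\{\mu\nu^* : \mu, \nu \in \mathcal{S},\ |\mu|=|\nu|,\ s(\mu)=s(\nu)\}
\]
is a finite-dimensional $*$-subalgebra of $\LK_0$ containing $a$. Write $a = \sum_{i=1}^m c_i \alpha_i\beta_i^*$ with $|\alpha_i|=|\beta_i|$ and $s(\alpha_i)=s(\beta_i)$, and set $\mathcal{S}_0 := \{\alpha_i, \beta_i : 1 \leq i \leq m\}$. If $E$ were row-finite with no sinks, the standard strategy would be to push every $\alpha_i\beta_i^*$ up to length $N := \max_i|\alpha_i|$ using the Cuntz--Krieger relation at each vertex $s(\alpha_i)$; but infinite emitters and sinks block this uniform lifting, so a more flexible construction is needed.

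The first step is to take $\mathcal{S}$ to be the smallest subset of $E^*$ containing $\mathcal{S}_0$ and closed under the following \emph{tail-swap} operation: whenever $\mu, \nu \in \mathcal{S}$ satisfy $|\mu|=|\nu|$ and $s(\mu)=s(\nu)$, and whenever $\gamma \in E^*$ satisfies $\nu\gamma \in \mathcal{S}$, then $\mu\gamma$ is also in $\mathcal{S}$. The key point is that $\mathcal{S}$ is finite: every path $\mu\gamma$ added uses only edges already present in members of $\mathcal{S}$ (since $\mu$ is such a member and $\gamma$ is a suffix of $\nu\gamma \in \mathcal{S}$), so by induction every element of $\mathcal{S}$ uses only edges appearing in some path in $\mathcal{S}_0$. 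Combined with the length bound $|\mu| \leq N$, this forces $|\mathcal{S}|$ to be bounded by the number of paths of length at most $N$ built from the finite edge set of $\mathcal{S}_0$.

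Next I would verify that $A$ is a $*$-subalgebra. Closure under the involution is immediate from $(\mu\nu^*)^* = \nu\mu^*$. For multiplication, expand
\[
(\mu_1\nu_1^*)(\mu_2\nu_2^*) = \mu_1(\nu_1^*\mu_2)\nu_2^*
\]
and reduce $\nu_1^*\mu_2$ via the Leavitt path algebra relations: it equals $p_{s(\nu_1)}$ if $\nu_1 = \mu_2$, a path $\gamma$ if $\mu_2 = \nu_1\gamma$, the involution $\gamma^*$ if $\nu_1 = \mu_2\gamma$, and zero otherwise. The nonzero products are respectively $\mu_1\nu_2^*$, $(\mu_1\gamma)\nu_2^*$ and $\mu_1(\nu_2\gamma)^*$; the tail-swap condition --- applied to $(\mu_1,\nu_1)$ and $(\nu_2,\mu_2)$ in the second and third cases --- places each of these in $A$. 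Since $\dim A \leq |\mathcal{S}|^2 < \infty$ and $a \in A$ by construction, the proof is complete.

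The main obstacle is establishing finiteness of $\mathcal{S}$ when $E$ has infinite emitters, where the naive ``lift to a common length'' strategy collapses. The tail-swap construction sidesteps this by assembling new paths only from edges already in use, so no new edges ever enter the construction and the process cannot grow unboundedly.
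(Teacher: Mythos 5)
Your proof is correct, but it reaches the finite-dimensional subalgebra by a genuinely different construction than the paper's. The paper fixes an enumeration $\{e_j\}$ of $E^1$ and, for each $J$ and $k$, forms $G_{k,J}=\lsp\{\alpha\beta^*:\alpha,\beta\in E^k_J\}$, the span over \emph{all} pairs of paths of length exactly $k$ built from the first $J$ edges; each $G_{k,J}$ is a finite direct sum of matrix algebras, and the key lemma is the absorption statement $G_{k,J}(v)G_{l,J}(w)\subseteq G_{k,J}(v)$ for $k>l$, which makes $F_{k,J}=\lsp\bigl(\bigcup_{l\le k}G_{l,J}\bigr)$ a finite-dimensional $*$-subalgebra containing $a$. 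You instead take a \emph{minimal} finite path set --- the tail-swap closure $\mathcal{S}$ of the paths occurring in $a$ --- and verify closure of $\lsp\{\mu\nu^*:\mu,\nu\in\mathcal{S},\ |\mu|=|\nu|,\ s(\mu)=s(\nu)\}$ under multiplication directly from the three nonzero cases of $\nu_1^*\mu_2$; the tail-swap operation is designed to supply exactly the paths $\mu_1\gamma$ and $\nu_2\gamma$ that arise there. Your finiteness argument is sound: new paths use no new edges, and since $|\mu\gamma|=|\nu\gamma|$ the closure never increases lengths, so $\mathcal{S}$ sits inside the finite set of paths of length at most $N$ over the finitely many edges occurring in $\mathcal{S}_0$ (you assert rather than prove the length bound, but it is immediate). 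Both arguments rest on the same underlying finiteness mechanism and, as you note, neither invokes row-finiteness nor the Cuntz--Krieger summation relation; the paper's version buys an explicit picture of the subalgebra as a sum of matrix algebras, making the locally finite structure of the core visible, while yours is more economical and produces a smaller subalgebra tailored to $a$.
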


To prove this result, we need to understand the finite-dimensional subalgebras of $\LK_0$.
We start by choosing an enumeration $\{e_j:j\in\N\}$ of the countable set $E^1$. For $J\in \N$, we write 
\[
E^k_J:=\big\{\alpha\in E^k:\alpha_i\in\{e_j:1\leq j\leq J\}\text{ for all $i$}\big\}, 
\]
and $E^0_J:=\{s(e_j):1\leq j\leq J\}$. Notice that the sets $E^k_J$ are all finite, and $E^k_J\subseteq E^k_{J+1}$. We define 
\[
G_{k,J}:=\lsp\{\alpha\beta^*:\alpha,\beta\in E^k_J\}.
\] 
For $\beta,\gamma\in E^k_J$ we have $|\beta|=|\gamma|=k$, and hence we have
\[
\beta^*\gamma=\begin{cases} s(\beta)=s(\gamma)&\text{if }\beta=\gamma\\0&\text{otherwise.}\end{cases}
\]
It follows that
\[
G_{k,J}(v):=\lsp\{\alpha\beta^*:=\alpha,\beta\in E^k_Jv\}
\]
is a (finite-dimensional) matrix algebra, and 
\[
v\not= w\Longrightarrow G_{k,J}(v) G_{k,J}(w)=\{0\}.
\]
Thus 
\[
G_{k,J}=\bigoplus\limits_{v\in E^0_J}G_{k,J}(v)
\] 
is a finite direct sum of matrix algebras.

\begin{lem}\label{gkgl=gk}
Suppose that $k,l\in \N$ and $k>l$. Then for every $J\in \N$ and $v,w\in E^0_J$, we have
\[
G_{k,J}(v)G_{l,J}(w)\subseteq G_{k,J}(v).
\]
\end{lem}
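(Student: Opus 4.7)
My plan is to prove the inclusion by checking it on spanning monomials and then extending by linearity. Take a typical generator $\alpha\beta^* \in G_{k,J}(v)$ with $\alpha,\beta\in E^k_J v$ and a typical generator $\gamma\delta^* \in G_{l,J}(w)$ with $\gamma,\delta\in E^l_J w$, and simplify the product
\[
(\alpha\beta^*)(\gamma\delta^*) = \alpha(\beta^*\gamma)\delta^*
\]
using the Cuntz--Krieger relations. Since $|\beta|=k>l=|\gamma|$, the standard telescoping calculation yields $\beta^*\gamma=0$ unless $\gamma$ is an initial segment of $\beta$, in which case $\beta=\gamma\mu$ for a unique path $\mu\in E^{k-l}$ and $\beta^*\gamma=\mu^*$.

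In the non-vanishing case the product then collapses to $\alpha(\delta\mu)^*$, and the main task is to check that this is again a spanning element of $G_{k,J}(v)$. There are three things to verify. First, $\delta\mu$ is a composable path because $r(\mu)=s(\gamma)=w=s(\delta)$, and it has length $l+(k-l)=k$. Second, $\delta\mu\in E^k_J$: every edge of $\delta$ belongs to $\{e_j:1\leq j\leq J\}$ by hypothesis, and every edge of $\mu$ does as well because $\mu$ is a final segment of $\beta\in E^k_J$. Third, $s(\delta\mu)=s(\mu)=s(\beta)=v$, which matches $s(\alpha)=v$. Hence both $\alpha$ and $\delta\mu$ lie in $E^k_J v$, so $\alpha(\delta\mu)^*\in G_{k,J}(v)$. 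Extending linearly finishes the argument.

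The main obstacle is not a conceptual one but simply the source/range bookkeeping under the Raeburn conventions. The mildly asymmetric conclusion---that the product lands in the $v$-component rather than the $w$-component---is forced precisely by the equality $s(\delta\mu)=s(\mu)=s(\beta)=v$, and this is the step where it is easiest to slip up. Everything else is routine manipulation with the Cuntz--Krieger relations.
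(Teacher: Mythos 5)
Your proof is correct and follows essentially the same route as the paper's: compute $(\alpha\beta^*)(\gamma\delta^*)$ on spanning monomials, observe it is $0$ unless $\beta=\gamma\mu$, and check that the surviving term $\alpha(\delta\mu)^*$ has source $s(\delta\mu)=s(\mu)=s(\beta)=v$. The only difference is that the paper also records the analogous statement for the product in the opposite order (needed later for Corollary~\ref{exfdalg}), while you supply a little more explicit bookkeeping that $\delta\mu\in E^k_J$; both are fine.
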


\begin{proof}
For $\alpha\beta^*\in G_{k,J}$ and $\gamma\delta^*\in G_{l,J}(w)$, we have $|\beta|=k>l=|\gamma|$, and hence
\[
(\alpha\beta^*)(\gamma\delta^*)=\begin{cases} \alpha(\delta\beta')^*&\text{if }\beta=\gamma\beta'\\0&\text{otherwise.}\end{cases}
\]
Since $s(\delta\beta')=s(\beta)=s(\alpha)=v$, we deduce that $(\alpha\beta^*)(\gamma\delta^*)\in G_{k,J}(v)$ (it could be $0$, but that is in $G_{k,J}(v)$ too). A similar argument gives $(\gamma\delta^*)(\alpha\beta^*)\in G_{k,J}(v)$. 
\end{proof}

\begin{cor}\label{exfdalg}
For each $J\in \N$, the set
\[
F_{k,J}:=\lsp\Big(\bigcup\{G_{l,J}:l\leq k\}\Big)
\]
is a finite-dimensional $*$-subalgebra of $\LK_0$.
\end{cor}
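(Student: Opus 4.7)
The plan is to check each requirement of the corollary in turn, leaning on Lemma~\ref{gkgl=gk} and the matrix‐algebra structure of $G_{l,J}$ developed just before it. There are four things to verify: that $F_{k,J}$ lies in $L_K(E)_0$, that it is finite dimensional, that it is closed under the involution, and that it is closed under multiplication.

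The first three are essentially bookkeeping. Every spanning element $\alpha\beta^*$ of a summand $G_{l,J}$ satisfies $|\alpha|=|\beta|=l$, so it has degree $0$; hence $F_{k,J}\subseteq L_K(E)_0$. For finite dimensionality, the discussion preceding the lemma shows each $G_{l,J}=\bigoplus_{v\in E^0_J}G_{l,J}(v)$ is a finite direct sum of finite-dimensional matrix algebras, and $F_{k,J}$ is the span of only finitely many such subspaces (those with $l\leq k$). For $*$-closure, $(\alpha\beta^*)^*=\beta\alpha^*$, and if $\alpha,\beta\in E^l_J$ with $s(\alpha)=s(\beta)=v$, then $\beta\alpha^*\in G_{l,J}(v)\subseteq G_{l,J}$, so each $G_{l,J}$ (and hence $F_{k,J}$) is self-adjoint.

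The only substantive step is closure under multiplication, and this is exactly what Lemma~\ref{gkgl=gk} is designed for. By bilinearity it suffices to show $G_{l,J}\cdot G_{m,J}\subseteq F_{k,J}$ whenever $l,m\leq k$. If $l\neq m$, Lemma~\ref{gkgl=gk} (applied in whichever order makes the first index larger) gives the product lying in $G_{\max(l,m),J}\subseteq F_{k,J}$. If $l=m$, I would simply record the computation analogous to the lemma's proof: for $\alpha\beta^*,\gamma\delta^*\in G_{l,J}$ with $|\beta|=|\gamma|=l$, one has
\[
(\alpha\beta^*)(\gamma\delta^*)=\begin{cases}\alpha\delta^*&\text{if }\beta=\gamma,\\ 0&\text{otherwise,}\end{cases}
\]
and in either case the result lies in $G_{l,J}\subseteq F_{k,J}$.

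There is no real obstacle here; the Corollary is structural. The only thing to watch is that one genuinely needs both orderings in Lemma~\ref{gkgl=gk} (which its proof provides) together with the diagonal case $l=m$ (which the lemma does not cover but which follows immediately from the matrix structure of $G_{l,J}$). Once these are combined, $F_{k,J}$ is a $*$-subalgebra of $L_K(E)_0$ of finite dimension, as claimed.
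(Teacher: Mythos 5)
Your proof is correct and follows essentially the same route as the paper's: $*$-closure and finite dimensionality come from the corresponding properties of the finitely many $G_{l,J}$, and multiplicative closure comes from Lemma~\ref{gkgl=gk}. You are in fact slightly more careful than the paper, whose proof says only that ``the lemma implies'' closure under multiplication even though the lemma covers only the case $k>l$; your explicit handling of the diagonal case $l=m$ via the matrix-algebra structure of $G_{l,J}$ fills in that small omission.
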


\begin{proof}
The set $F_{k,J}$ is a vector subspace of $\LK_0$ and is closed under conjugation because each $G_{l,J}$ is. The lemma implies that it is closed under multiplication. It is finite-dimensional because there are only finitely many $G_{l,J}$ in play, and their union spans $F_{k,J}$.
\end{proof}

\begin{proof}[Proof of \Cref{aisinfinitesub}]
We suppose that $a\in \LK_0$. Then there are a finite set $S$ of pairs $(\alpha,\beta)\in E^*\times E^*$ such that $s(\alpha)=s(\beta)$ and $|\alpha|=|\beta|$, and scalars $c_{\alpha,\beta}$ such that
\[
a=\sum_{(\alpha,\beta)\in S}c_{\alpha,\beta}\alpha\beta^*.
\]
Since the set $S$ is finite, the set of vertices
\[
V:=\big\{v\in E^0:v=s(\alpha_i)\text{ or }s(\beta_i) \text{ for some $(\alpha,\beta)\in S$ and $i\leq |\alpha|$}\big\}
\]
is finite. Thus there exists $J$ such that $V\subseteq E^0_J$. Then with $k:=\max\{|\alpha|:(\alpha,\beta)\in S\}$, we have
\[
\alpha\beta^*\in G_{|\alpha|,J}\subseteq F_{k,J}\quad\text{for all $(\alpha,\beta)\in S$.}
\]
Thus $a=\sum c_{\alpha,\beta}\alpha\beta^*$ belongs to the subspace $F_{k,J}$, which by Corollary~\ref{exfdalg} is a finite-dimensional $*$-subalgebra of $\LK_0$.
\end{proof}

\end{document}